\algnewcommand\And{\textbf{and} }
\algnewcommand\Or{\textbf{or} }
\algnewcommand{\Inputs}[1]{%
  \State \textbf{Inputs:}
  \Statex \hspace*{\algorithmicindent}\parbox[t]{.8\linewidth}{\raggedright #1}
}
\DeclareMathOperator*{\argmin}{arg\,min}
\theoremstyle{definition}
\newtheorem{theorem}{Theorem}[section]
\newtheorem{corollary}{Corollary}[theorem]
\newtheorem*{remark}{Remark}
\newcommand{\rt}{r_{\mathrm{top}}}
\newcommand{\rb}{r_{\mathrm{bot}}}
\newcommand{\R}{\mathbb{R}}
\newcommand{\N}{\mathbb{N}}
\newcommand{\card}{\mathop\mathrm{card}}
\newcommand{\tr}{\mathop\mathrm{tr}}
\newcommand{\tol}{\tau}
\title{Adaptive Gradient Enhanced Gaussian Process Surrogates for Inverse Problems}
\author[1]{Phillip Semler}
\author[1]{Martin Weiser}
\affil[1]{Zuse Institute Berlin, Takustr. 7, 14195 Berlin, Germany}
\begin{document}

\maketitle
\begin{abstract}
Generating simulated training data needed for constructing sufficiently accurate surrogate models to be used for efficient optimization or parameter identification can incur a huge computational effort in the offline phase. We consider a fully adaptive greedy approach to the computational design of experiments problem using gradient-enhanced Gaussian process regression as surrogates. Designs are incrementally defined by solving an optimization problem for accuracy given a certain computational budget. We address not only the choice of evaluation points but also of required simulation accuracy, both of values and gradients of the forward model.
Numerical results show a significant reduction of the computational effort compared to just position-adaptive and static designs as well as a clear benefit of including gradient information into the surrogate training.
\end{abstract}

\section{Introduction}
The response of many parameter-dependent physical models can be described by a functional relation $y$ mapping parameters $p$ to observable quantities $y(p)$. The inverse problem of inferring model parameters $p$ from measurements $y^m$ is frequently encountered in various fields including physics, engineering, finance, and biology.  Point estimates $p_*$, e.g., the maximum likelihood estimate $\mathrm{arg\,min}_p \|y(p)-y^m\|$, are often computed by optimization methods or by sampling the posterior probability distribution of the parameters given the measurement data~\cite{EngelHankeNeubauer1996,KaipioSomersalo2005}.

Often, $y$ is only available as a complex numerical procedure, such as the numerical solution of a partial differential equation. Solving an optimization problem for parameter estimation is then computationally expensive. The effort can be too high for online and real-time applications. Fast surrogate models approximating $y$ are utilized as a replacement for the forward model when solving the inverse problems, in particular when both parameters $p$ and measurements $y^m$ are low-dimensional. Various types of surrogates are employed, including polynomials, sparse grids, tensor trains, artificial neural networks, and Gaussian process regression (GPR)~\cite{Schneider,Zaytsev}, on which we focus here. 

Surrogate models rely on values $y(p_i)$ at specific evaluation points $p_i$  as training data. The quality of the resulting surrogate heavily depends on the number and position of these sample points. Constructing an accurate surrogate model can become computationally expensive when a large number of evaluations is required. Consequently, strategies for selecting near-optimal evaluation points have been proposed, in particular for analytically well-understood GPR~\cite{Rasmussen}. A priori point sets~\cite{Giunta,Queipo} are effectively supplemented by adaptive designs~\cite{Crombecq,Joseph,Lehmensiek,Sugiyama}. Active learning relies on pointwise estimates of the surrogate approximation error for guiding the selection process, usually including the parameter point that maximizes some acquisition function into the training set~\cite{Hennig,Mockus,Wu}. 

When numerical procedures like finite element (FE) solvers are employed to compute training data, the resulting evaluations of $y(p_i)$ are affected by discretization and truncation errors. While uniformly high accuracy can be used, this incurs a high computational effort. The trade-off between accuracy and cost has received limited systematic investigation. Besides two-level approaches using a low-fidelity and a high-fidelity models~\cite{Nitzler}, an adaptive choice of  evaluation tolerances has been proposed~\cite{SagnolHegeWeiser2016}. Significant efficiency gains by joint optimization of evaluation position and tolerance given a limited computational budget have been achieved~\cite{SemlerWeiser}.

In the current paper, we extend~\cite{SemlerWeiser} by including gradient information. Gradients of FE simulations can often be obtained efficiently by solving tangent or adjoint equations~\cite{GriewankWalther2008}.
We make use of gradient enhanced GPR (GEGPR) promising higher accuracy than standard GPR~\cite{Eriksson,Anqi,Wu}. Due to the high information content of gradient data, the required number of evaluation points is small compared to gradient-free GPR. We devise a greedy-type strategy that simultaneously optimizes the next positions and tolerances for forward model and gradient evaluation. For that, we extend the accuracy and work models used in the optimizer to take gradient computation into account.

For measuring the accuracy of the surrogate model, we adopt a goal-oriented approach~\cite{Vexler2013}, assessing the approximation error by its impact on the accuracy of the identified parameter. We aim for a uniform absolute tolerance or, if that is not reachable, at least a uniform bounded deterioration relative to the exact model results. We focus on adaptive FE simulations, where standard a priori error estimates and coarse estimates of the computational work are available. 

The remainder of the paper is structured as follows: In Sec.~\ref{sec:setup} we formalize the parameter identification and surrogate model construction problems, introducing notation and GEGPR. Their adaptive construction is given in Sec.~\ref{sec:doe} after introducing gradient enhanced work and accuracy models. Numerical experiments are provided in Sec.~\ref{sec:experiments}.

\section{Surrogate-based parameter identification} \label{sec:setup}
First, we briefly define the parameter identification  a maximum posterior evaluation in a Bayesian context, before introducing GEGPR surrogate models. For a more detailed discussion, we refer to~\cite{SemlerWeiser}.

\subsection{Inverse problem}
Let the forward problem $y(p) = \mathcal{F}(p):\mathcal{X}\subset \R^d\rightarrow\mathbb{R}^m$ of mapping a parameter vector $p \in \mathcal{X}$ to a model output $y(p) \in \mathbb{R}^m$  be Lipschitz-continuously differentiable and the parameter space $\mathcal{X}$  be bounded and closed. 
We assume that numerical approximations $y_{\tol}(p)$ of $y(p)$ and $y'_{\tol'}(p)$ of the derivative $y'(p)$ can be computed, satisfying the error bounds $\|y_{\tol}(p)-y(p)\| \le \tol$ and $\|y'_{\tol'}(p)-y'(p)\| \le \tol'$, respectively, for any given tolerances $\tol,\tol'>0$ in some norms $\left \| \cdot \right \|$. 
We consider the simplest Bayesian setting of normally distributed measurement errors with likelihood covariance $\Sigma_l\in\R^{m\times m}$ and a Gaussian prior with covariance $\Sigma_p\in\R^{d\times d}$. Given measurements $y^m$, we are interested in the maximum posterior point estimate $p(y^m)$, a minimizer of the negative log-posterior
\begin{equation}\label{eq:objective}
    J(p;y^m) := \frac{1}{2}\| y(p)-y^m \|_{\Sigma_l^{-1}}^2+ \frac{1}{2}  \| p-p^0  \|^2_{\Sigma_p^{-1}} 
\end{equation}
over $p\in\mathcal{X}$, where $\left \| v \right \|_{\Sigma_{}^{-1}} := v^T \Sigma_{}^{-1} v$ for positive symmetric definite $\Sigma$. By regularity of $y$ and compactness of $\mathcal{X}$, $J$ is Lipschitz-continuously differentiable and a, not necessarily unique, minimizer exists. 
For the numerical minimization of~\eqref{eq:objective}, we consider a Gauss-Newton (GN) method~\cite{Deuflhard2004} and assume that the forward model is compatible and the measurement errors sufficiently small such that the residual $y(p(y^m))-y^m$ is small, such that the GN method converges locally.

\subsection{Gradient-enhanced Gaussian process regression} \label{sec:surrogate}

We aim at building a surrogate model $y^*$ for $y:\mathcal{X} \to \R$ based on a set of evaluations of the model and its derivative with certain tolerances $\tol,\tol'$.
We directly consider GEGPR, see~\cite{Wu} and the references therein, and refer to~\cite{Duvenaud,Kuss,Candela,SemlerWeiser} for purely value-based GPR surrogate modeling.

\subsubsection{Designs and training data}

We define $D:=\{\mathcal{D}:\mathcal{X} \to (\R_+\cup \{\infty\})^2 \mid \card X(\mathcal{D})\in\N\}$ as the set of admissible designs, where $X(\mathcal{D}) :=  \{ p \in \mathcal{X} \mid \min_{k\in\{1,2\}}\{\mathcal{D}(p)_k\} < \infty\}$ is the evaluation set. For $p_i\in X(\mathcal{D})$, the design $\mathcal{D}(p_i) = [\tol_i,\tol_i']$ defines the evaluation tolerances for values and derivatives of the forward model $y$ forming the training data $y_i=y_{\tol_i}(p_i)$ and $y_i'=y'_{\tol'_i}(p_i)$. We assume for simplicity that the component-wise evaluation errors $e_i = y_i-y(p_i)$ and $e'_i = y'_i-y'(p_i)$ are independent and normally distributed, i.e. $e_i \sim \mathcal{N}(0,\tol_i^2I_m)$ and $e'_i \sim \mathcal{N}(0,(\tol'_i)^2 I_{md})$, where we denote the identity matrix of dimension $m$ by $I_m$ and identify $e'_i\in\R^{m\times d}$ with its column-major vector representation in $\R^{md}$.

The complete model evaluation at a parameter point $p_i$ thus comprises a vector $y_i\in\R^m$ and a matrix $y_i' \in\R^{m\times d}$. We define $z_i = (y_i,y_i') \in \R^{m+md}$ as the vector representation of the complete evaluation. The training data is then distributed as 
\begin{equation}\label{eq:joint-z-distribution}
    z_i\sim\mathcal{N}(z(p_i),E_i), \quad  
    E_i = \begin{bmatrix} \tol_i I_m \\ & \tol_i' I_{md} \end{bmatrix}.
\end{equation}

\begin{remark}
In principle, the accuracies of computing values $y$ and derivatives $y'$ can be independent. 
In practical computation, however, when evaluating derivatives, obtaining the value at the same position $p_i$ with the same accuracy $\tol_i = \tol_i'$ usually comes for free. We therefore restrict the attention to designs satisfying $\tol_i \le \tol_i'$ for all $i$.
\end{remark}

\subsubsection{Gaussian process prior}

Assume that forward models are a Gaussian process, i.e. model evaluations $z(p_i)$ are jointly normally distributed for any finite evaluation set $X=\{p_i|i=1,\dots, n\}$, which forms the GPR prior distribution
\begin{equation}\label{eq:EGP}
    \pi_{\mathrm{prior}}(Z) = \mathcal{N}(\zeta,\tilde K) 
\end{equation}
for joint model responses $Z = (z(p_i))_{i=1,\dots,n} \in \R^{n(m+md)}$ with given mean $\zeta$. The structure of $Z$ induces a nested block form of the covariance matrix $\tilde K = (\hat K_{ij})_{ij}$, $i,j=1,\dots,n$, see~\cite{Solak,Wu}, 
with 
\begin{equation}\label{eq:kovarianceenhanced}
   \hat K_{ij} =  \begin{bmatrix}
                K_{ij} & K_{ij}' \\ 
                K_{ji}' &  K_{ij}''
                \end{bmatrix} .
\end{equation}
Here, $K_{ij}= k(p_i,p_j) K\in\R^{m\times m}$ is the covariance of the model values $y(p_i)$ and $y(p_j)$, assumed to be separable in terms of a positive definite kernel $k$ depending on the evaluation positions $p_i$ and $p_j$ only, and a matrix $K$ describing the covariance of model response components independently of the evaluation position. Most often, $k$ is assumed to depend on the distance of $p_i$ and $p_j$ only, such that $k(p_i,p_j) = k(\|p_i - p_j\|)$. A common choice is the Gaussian kernel $k(x) = \exp(-\sigma x^2)$. 

The remaining blocks $K_{ij}'\in\R^{m\times dm}$ and $K_{ij}''\in\R^{dm\times dm}$ describe the covariance of model values and model derivatives $y'$, and of the covariance of model derivatives, respectively. Since differentiation is a linear operation, the derivatives are again normally distributed, and the covariances are given in terms of the kernel's derivatives as $\mathrm{cov}(y(p),y'(q)) = \nabla_q k\left(p,q\right)$, see~\cite[Chap.~9.4]{Rasmussen}.
We therefore can write the matrix blocks in terms of the Kronecker product $\otimes$ as
\begin{equation} \label{eq:gradient-correlation}
    K'_{ij} = \partial_{p_j}k(p_i,p_j) \otimes K , 
    \quad K'_{ji} = \partial_{p_i}k(p_i,p_j) \otimes K , 
    \quad K''_{ij} = \partial_{p_i}\partial_{p_j}k(p_i,p_j) \otimes K.
\end{equation}

\subsubsection{Regression}

Given the GPR prior~\eqref{eq:EGP} and approximate complete evaluations $\hat{Z} = (z_i)_{i=1,\dots,n}\in\R^{n(m+md)}$ according to~\eqref{eq:joint-z-distribution}, the GPR likelihood is $\pi_{\rm like}(\hat Z\mid Z) = \mathcal{N}(z|_X,\tilde E)$, with $\tilde E = \mathrm{diag}(E_1,\dots,E_n)$,
and the GPR posterior probability for the complete model response $Z$ is
$
    \pi_{\rm post}(Z\mid\hat Z) \propto \pi_{\rm like}(\hat Z\mid Z) \pi_{\rm prior}(Z)
$
by Bayes' rule. As a product of two Gaussian distributions, $\pi_{\rm post}$ is again Gaussian, with covariance $\Gamma = \left(\tilde K^{-1} + \tilde E^{-1}\right)^{-1}$ and mean $\bar Z = \Gamma(\tilde K^{-1} \zeta + \tilde E^{-1}\hat Z)$. In particular for $p_n$, the marginal covariance $\Gamma_{\mathcal{D}}(p_n;\tol_n,\tol_n')
\in \R^{m(1+d)\times m(1+d)}$ according to the block structure of $Z$ and $\Gamma$ is a monotone function of $\tol_n$ and $\tol_n'$, i.e. $\Gamma_{\mathcal{D}}(p_n,\tol_n+\delta,\tol_n'+\delta') \succeq \Gamma_{\mathcal{D}}(p_n,\tol_n,\tol_n')$ for $\delta,\delta'\ge 0$.

Regression is performed for a point $p_n$ with unknown model response, i.e. formally $\tol_n = \tol'_n = \infty$, by extracting the marginal mean $z_{\mathcal D}(p_n) = \bar Z_n \in \R^{m+md}$ and the marginal covariance $\Gamma_{\mathcal{D}}(p_n)$. Note that $z_{\mathcal D}(p_n) = (y_{\mathcal{D}}, y'_{\mathcal{D}})(p_n)$ provides an estimate for both, values and derivatives of the model at $p_n$, and $\Gamma_{\mathcal{D}}(p_n)$ quantifies the uncertainty of this estimate. Due to the choice~\eqref{eq:gradient-correlation} for the prior covariance of values and gradients, the GPR model is consistent, i.e. the derivative estimate $y_{\mathcal{D}}'$ coincides with the parametric derivative of the value estimate $y_{\mathcal{D}}$.

\begin{remark}
    If the components of the model response are assumed to be uncorrelated, the covariance matrix $K$ is diagonal. With the independence of the evaluation errors postulated in~\eqref{eq:joint-z-distribution}, both $\tilde K$ and $\tilde E$, and therefore $\Gamma$, decompose into $m$ independent blocks, one for each component. Then, the GPR can be performed independently for each component using a simpler scalar GPR model. An appropriate covariance model for the components can, however, result in superior results~\cite{bai2023gaussian}.
\end{remark}

\section{Adaptive Training Data Generation} \label{sec:doe}

Replacing exact model evaluations $y(p)$ in the objective~\eqref{eq:objective} by a cheaper GPR model $y_{\mathcal{D}}(p)$ based on a design $\mathcal{D}$ yields maximum posterior point estimates 
\[
p_{{\mathcal{D}}}(y^m) = \argmin_{p\in \mathcal{X}} J_{{\mathcal{D}}}(p;y^m)
:= \frac{1}{2}\|y_{{\mathcal{D}}}(p)-y^m\|_{\Sigma_l^{-1}}^2 + \frac{1}{2}\|p-p^0\|_{\Sigma_p^{-1}}^2
\]
and saves computational effort when computing Gauss-Newton steps $\Delta p_{\mathcal{D}}(p,y^m)$ by solving
\begin{equation*}
    \left( y_{\mathcal{D}}'(p)^T\Sigma_l^{-1}y_{\mathcal{D}}'(p) + \Sigma_p^{-1} \right) \Delta p_{{\mathcal{D}}} 
    = (y_{\mathcal{D}}(p)')^T\Sigma_l^{-1}(y_{\mathcal{D}}(p)-y^m) + \Sigma_p^{-1}(p^0-p).
\end{equation*}
It also incurs both some error $p_{{\mathcal{D}}}(y^m)-p(y^m)$ of the resulting identified parameters and a considerable computational effort for evaluating the model according to the design ${\mathcal{D}}$ beforehand.

If computational resources are limited, the challenge is to determine which parameters $p_i$ simulations should be run with, and which tolerances $\tol_i,\tol_i'$ should be used, to achieve the best accuracy. This design of experiments problem for $\mathcal{D}$ involves balancing the competing objectives of minimizing the expected approximation error $E(\mathcal{D})$ of the surrogate model and minimizing the computational effort $W(\mathcal{D})$ required to create the training data. Since little is known about the model derivative $y'$, and consequently about $E(\mathcal{D})$, before any simulations have been performed, we follow a sequential design of experiments approach~\cite{SemlerWeiser} by incrementally spending computational budget of size $\Delta W$. In each step, we thus have to solve an incremental design problem for $\tilde{\mathcal{D}}\in D$ refining a given preliminary design $\mathcal{D}$:
\begin{equation}\label{eq:doe-incremental}
    \min_{\tilde{\mathcal{D}}\le \mathcal{D}} E(\tilde{\mathcal{D}}) \quad\text{subject to } W(\tilde{\mathcal{D}}|\mathcal{D}) \le \Delta W.
\end{equation}
We will establish quantitative error estimates $ E(\mathcal{D})$ and work models $W(\mathcal{D})$, and then develop a heuristic for approximately solving problem \eqref{eq:doe-incremental}.

\subsection{Accuracy model}

First we need to quantify the parameter reconstruction error $p_{\mathcal{D}}(y^m)-p(y^m)$ in terms of the measurement error variance $\Sigma_l$ and the surrogate model approximation quality $y_{\mathcal{D}}-y$ depending on the design $\mathcal{D}$.
We start by establishing an estimate of the parameter reconstruction error for deterministic functions $y(p)$ and $y_{\mathcal{D}}(p)$. 

\begin{theorem} \label{th:local-errorbound}
    Assume that $y$ is twice continuously differentiable with uniformly bounded first and second derivatives in a neighborhood $B$ of a minimizer $p^*\in\mathcal{X}$ of $J(p;y^m)$ for some measurement data $y^m$, such that the residual $ \left\| y(p^*)-y^m\right\|_{\Sigma_l^{-1}}$
    is sufficiently small and $y'(p^*)\Sigma_l^{-1}y'(p^*) + \Sigma_p^{-1}$ is positive definite.

    Then, there are $\bar\epsilon,\bar\epsilon'>0$ as well as constants $a,a'$, such that for all $\epsilon\le \bar\epsilon$ and $\epsilon' \le \bar\epsilon'$ and 
    surrogate models $y_\mathcal{D}:\R^d\to\R^m$ with $\|y_{\mathcal{D}}-y\|_{L^\infty(B)} \le \epsilon$ and $\|y'_{\mathcal{D}} - y'\|_{L^\infty(B)} \le \epsilon'$
    there is a locally unique minimizer $p_\mathcal{D}(y^m)$ of $J_\mathcal{D}$ satisfying the error bound
    \begin{equation}\label{eq:error-bound}
        \left\|p_{\mathcal{D}}(y^m)-p^*\right\| \le a\epsilon + a'\epsilon'.
    \end{equation}
\end{theorem}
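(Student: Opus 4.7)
The plan is to realize $p_\mathcal{D}(y^m)$ as a perturbed zero of the first-order optimality condition $\nabla_p J(\cdot;y^m)=0$ and then to run a quantitative inverse-function / contraction argument. Introduce the residual map $R(p):=\nabla_p J_\mathcal{D}(p;y^m)-\nabla_p J(p;y^m)$; any stationary point $p_\mathcal{D}$ of $J_\mathcal{D}$ satisfies $\nabla_p J(p_\mathcal{D};y^m)=-R(p_\mathcal{D})$. Since $\nabla_p J(p^*;y^m)=0$ by assumption, the proof reduces to two ingredients: (i) positive-definiteness of $H:=\nabla_p^2 J(p^*;y^m)$ with a uniform lower bound on a small ball around $p^*$, and (ii) a pointwise bound on $R$ of the form $\|R\|_{L^\infty(B_r(p^*))}\le c\epsilon+c'\epsilon'$.

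\textbf{Two ingredients.} For (i), direct differentiation of~\eqref{eq:objective} yields
\[
H = y'(p^*)^T\Sigma_l^{-1}y'(p^*)+\Sigma_p^{-1}+\sum_{k=1}^m\bigl[\Sigma_l^{-1}(y(p^*)-y^m)\bigr]_k\, y_k''(p^*),
\]
so $H$ is a perturbation of the assumed positive definite Gauss--Newton operator by a term of norm at most $\|\Sigma_l^{-1}\|\,\|y(p^*)-y^m\|\,\|y''\|_{L^\infty(B)}$. Smallness of the residual keeps $H$ positive definite; $C^2$ regularity of $y$ then propagates this to a uniform positive lower bound on a closed ball $\overline{B_r(p^*)}\subset B$. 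For (ii), adding and subtracting $y_\mathcal{D}'(p)^T\Sigma_l^{-1}y(p)$ produces the decomposition
\[
R(p)=y_\mathcal{D}'(p)^T\Sigma_l^{-1}\bigl[y_\mathcal{D}(p)-y(p)\bigr]+\bigl[y_\mathcal{D}'(p)-y'(p)\bigr]^T\Sigma_l^{-1}\bigl(y(p)-y^m\bigr),
\]
from which the assumed $L^\infty$ hypotheses yield $\|R\|_{L^\infty(B_r(p^*))}\le c\epsilon+c'\epsilon'$ with explicit constants $c\le \|\Sigma_l^{-1}\|(\|y'\|_{L^\infty(B)}+\bar\epsilon')$ and $c'\le \|\Sigma_l^{-1}\|\sup_B\|y-y^m\|$; in particular $c'$ inherits the smallness of the residual, as the proof of~\eqref{eq:error-bound} should.

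\textbf{Combination and main obstacle.} With (i) and (ii) in hand, I would analyse the fixed-point map $\Phi(p):=p-H^{-1}[\nabla_p J(p;y^m)+R(p)]$. A second-order Taylor expansion of $\nabla_p J$ around $p^*$, justified by $C^2$ regularity of $y$, gives
\[
\|\Phi(p)-p^*\|\le \|H^{-1}\|\bigl(C\|p-p^*\|^2+c\epsilon+c'\epsilon'\bigr)
\]
for $p\in B_r(p^*)$, with $C$ bounding the Lipschitz constant of $\nabla_p^2 J$ on $\overline{B_r(p^*)}$. Choosing $r:=2\|H^{-1}\|(c\epsilon+c'\epsilon')$ and $\bar\epsilon,\bar\epsilon'$ small enough that $2C\|H^{-1}\|^2(c\bar\epsilon+c'\bar\epsilon')\le\tfrac12$ renders $\Phi$ a self-map on $B_r(p^*)$ and, by the same Taylor estimate applied between two candidate points, a contraction there. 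Its unique fixed point is $p_\mathcal{D}(y^m)$, and its distance to $p^*$ is at most $r$, which yields~\eqref{eq:error-bound} with $a:=2\|H^{-1}\|c$ and $a':=2\|H^{-1}\|c'$. The main obstacle is the local-uniqueness clause: $\nabla_p^2 J_\mathcal{D}$ involves second derivatives of the surrogate $y_\mathcal{D}$ which are \emph{not} controlled by the stated $L^\infty$ bounds on values and gradients, so one cannot argue uniqueness directly from strict convexity of $J_\mathcal{D}$. Running the contraction on $\Phi$, where only the exact gradient $\nabla_p J$ (for which full $C^2$ information is available) enters the linearization and $R$ is treated merely as a bounded perturbation, is the key technical move that sidesteps any regularity assumption on the surrogate.
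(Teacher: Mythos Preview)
The paper does not supply its own proof here; it defers to~\cite[Corollary~3.1.1]{SemlerWeiser}. Your perturbed-optimality-condition strategy is the natural route, and ingredients (i) and (ii) are correct. The self-map estimate $\|\Phi(p)-p^*\|\le\|H^{-1}\|\bigl(o(\|p-p^*\|)+c\epsilon+c'\epsilon'\bigr)$ follows from the $C^2$ regularity of $y$ alone and already yields, via Brouwer, a critical point of $J_\mathcal{D}$ in $\overline{B_r(p^*)}$ satisfying~\eqref{eq:error-bound} with the constants you derive.

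The gap is the contraction claim. You assert that ``the same Taylor estimate applied between two candidate points'' makes $\Phi$ a contraction, but your displayed estimate bounds $\|\Phi(p)-p^*\|$, not $\|\Phi(p_1)-\Phi(p_2)\|$. Writing out the latter,
\[
\Phi(p_1)-\Phi(p_2)=-H^{-1}\bigl[\nabla_p J(p_1)-\nabla_p J(p_2)-H(p_1-p_2)\bigr]-H^{-1}\bigl[R(p_1)-R(p_2)\bigr],
\]
the first bracket is small by continuity of $\nabla_p^2 J$ near $p^*$, but the second is a Lipschitz increment of $R=\nabla_p J_\mathcal{D}-\nabla_p J$, and controlling it requires a Lipschitz bound on $y_\mathcal{D}'$---exactly the second-derivative information on the surrogate you correctly flagged as absent from the hypotheses. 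Treating $R$ as a bounded perturbation suffices for the self-map inequality (a statement about the \emph{size} of $R$) but not for contraction (a statement about \emph{increments} of $R$), so the trick you describe does not in fact sidestep the obstacle. As written, your argument delivers existence and the bound~\eqref{eq:error-bound} but not local uniqueness; to recover the latter you must either import a Lipschitz assumption on $y_\mathcal{D}'$ (automatic for the smooth GPR surrogates actually used in the paper, and presumably present in the cited reference) or weaken the conclusion to existence of \emph{a} minimizer in $B_r(p^*)$ obeying~\eqref{eq:error-bound}.
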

A more detailed claim and the proof is given in~\cite[Corollary 3.1.1]{SemlerWeiser}. Though the constants $a,a'$ are quantitatively unavailable for concrete problems, Thm.~\ref{th:local-errorbound} establishes a linear relation between the surrogate model accuracy and the incurred error in the parameter estimate. The factors $a$ and $a'$ can be estimated numerically by bounding linearized error transport through a Gauß-Newton iteration. 

We expect a minimal absolute reconstruction error, but due to measurement inaccuracies, we cannot avoid some error $e_0(p)$ and therefore consider a small relative error to be acceptable. The unavoidable error level can be bounded in terms of the problem properties, see~\cite{SemlerWeiser}. We therefore define the local error quantity 
\begin{equation}\label{eq:local-error-model}
e_{\mathcal{D}}(p) := \frac{a(p)\epsilon + a(p)'\epsilon'}{1+\alpha e_0(p)} .
\end{equation}

Since during the construction of the surrogate model $y_{\mathcal{D}}$ by minimizing~\eqref{eq:doe-incremental} the measurement values $y^m$ and hence the parameter position $p=p(y^m)$ of interest are unknown, the error quantity~\eqref{eq:local-error-model} needs to be considered over the whole parameter region $\mathcal{X}$.
We therefore define the accuracy model 
\begin{equation}\label{eq:error-model}
    E({\mathcal{D}}) := \|e_{\mathcal{D}}\|_{L^q(\mathcal{X})} \quad \text{for some $1\le q < \infty$},
\end{equation}
that is to be minimized by selecting an appropriate design ${\mathcal{D}}$. 
Choosing $q\approx 1$ would focus on minimizing the average parameter reconstruction error, while choosing $q$ very large would focus on the worst case and impose a roughly uniform accuracy. For the numerical experiments in Sec.~\ref{sec:experiments} we have chosen $q=2$.

Still missing are the surrogate model error bounds $\epsilon,\epsilon'$ in terms of the design ${\mathcal{D}}$. Unfortunately, for virtually all cases of practical interest, there is little hope for obtaining simultaneously rigorous and quantitatively useful bounds. For GPR surrogate models, the global support of the posterior probability density precludes the existence of a strict bound, though its fast decay provides thresholds that are with high probability not exceeded. The assumed normal distribution of errors $z(p)-z_{\mathcal{D}}(p) \sim \mathcal{N}(0,\Gamma_{\mathcal{D}}(p))$ implies that both the Euclidean norm $\|y(p)-y_{\mathcal{D}}(p)\|_2$ and the Frobenius norm $\|y'(p)-y'_{\mathcal{D}}(p)\|_F$  are generalized-$\chi^2$-distributed and hence formally unbounded. Instead of a strict bound, we use a representative statistical quantity for $\epsilon$ and $\epsilon'$ based on the marginal covariance $\Gamma_{y_{\mathcal{D}}} = \Gamma_{\mathcal{D}}(p)_{1:m,1:m}$, such as the mean~\cite{MathaiProvost1992} 
\begin{equation} \label{eq:GPR-error-estimate}
    \epsilon := \tr(\Gamma_{y_\mathcal{D}}).
\end{equation} 
Analogously, $\epsilon'$ can be defined in terms of $\Gamma_{y'_{\mathrm{D}}}= {\Gamma_{\mathcal{D}}}(p)_{m+1:m(d+1),m+1:m(d+1)}$. Inserting the thus chosen values of $\epsilon$ and $\epsilon'$ into~\eqref{eq:local-error-model} completes the accuracy model.

One advantage of integrating derivative information into the GPR surrogate is that there is an explicit variance estimate available for defining $\epsilon'$. In contrast, GPR surrogates defined only in terms of the values $y$ as considered in~\cite{SemlerWeiser} must rely on an empirical relation of $\epsilon$ and $\epsilon'$.

\subsection{Work model}

The evaluation of the forward model $y(p)$ usually involves some kind of numerical approximation, resulting in an approximation $y_\tol(p)$. While in principle any accuracy $\|y_\tol(p)-y(p)\| \le \tol$ for arbitrary tolerance $\tol>0$ can be achieved, this requires a computational effort $W(\tol)$ to be spent on the evaluation. 
For adaptive finite element computations in $\R^l$ with ansatz order $r$ and $N$ degrees of freedom, we expect a discretization error $\epsilon = \mathcal{O}(N^{-r/l})$~\cite{DeuflhardWeiser2012}. Assuming an optimal solver with computational work $\mathcal{O}(N)$, we obtain a work model $W(\tol) = \tol^{-2s}$ with $s=l/(2r)>0$, see~\cite{Weiser}. $W$ is monotone and satisfies the barrier property $W(\tol)\to\infty$ for $\tol\to 0$ and the minimum effort property $W(\tol)\to 0$ for $\tol\to\infty$.

Including gradient information is of particular interest if derivatives can be computed efficiently, e.g., with adjoint methods, such that the cost of derivative computation is a small multiple $c$ of the value computation cost, and independent of the number $d$ of parameters~\cite{GriewankWalther2008}. This leads to the work model $W(\tol') = c(\tol')^{-2s}$. 

The computational effort incurred by a design $\mathcal{D}$ is then
\begin{equation}\label{eq:work-model}
    W({\mathcal{D}}) := \sum_{p_i\in X(\mathcal{D})} \tol_i^{-2s} + c\; (\tol'_i)^{-2s}.
\end{equation}
Being interested in \emph{incremental designs}, we assume $\mathcal{D}$ to be a design already realized. Evaluating the model on a finer design $\tilde{\mathcal{D}} \le \mathcal{D}$ can consist of simulating the model for parameters $p\not\in X(\mathcal{D})$, or improving the accuracy of already performed simulations for $p\in X(\mathcal{D})$ with $\tilde{\mathcal{D}}(p)_k< \mathcal{D}(p)_k$ for some $k\in\{1,2\}$, or both. If already conducted simulations can be continued instead of started again, the computational effort of obtaining the training data set $\tilde{\mathcal{D}}$ from $\mathcal{D}$ is
\begin{equation} \label{eq:fe-work}
W(\tilde{\mathcal{D}}\mid\mathcal{D}) = W(\tilde{\mathcal{D}}) - W(\mathcal{D}).
\end{equation}

\subsection{The design of computer experiments problem}

The design problem~\eqref{eq:doe-incremental} is combinatorial in nature due to the unknown number $n$ of evaluation points and the decision between introducing new points or reusing existing ones. It is therefore highly nonlinear and difficult to treat rigorously due to the parameter locations $p_i$ to be optimized. In particular, relaxing the design to the space of nonnegative regular Borel measures, as used in~\cite{NeitzelPieperVexlerWalter2019}, is not feasible due to the nonlinearity of the work model~\eqref{eq:fe-work}. 

We therefore take a heuristic two-stage approach. We simplify the problem by decoupling the choice of evaluation positions $p_i$ from the choice of evaluation accuracies $\tol_i, \tol_i'$.
In the first stage, we select few promising additional points for inclusion into the evaluation set $X$. In the second stage, the evaluation tolerances $\tol_i$ and $\tol_i'$ are then optimized for minimal error $E(\tilde{\mathcal{D}})$ given the computational budget constraint $W(\tilde{\mathcal{D}}|\mathcal{D}) \le \Delta W$. Then the necessary evaluations of the forward model are performed and the GP surrogate model is updated, yielding improved error estimates for the next incremental design.

A parameter point $p$ is particularly promising for inclusion, if its predicted impact on the overall error $E$ is large. The impact can be estimated by the local derivative of $e_{\mathcal{D}}(p)^q$ with respect to work spent for approximating $y(p)$. We assume $\tol' = \beta \tol$ for some $\beta\in[0,1]$ and consider
\begin{align*}
    \frac{\partial e_{\mathcal{D}}(p)}{\partial W(p)} (\beta)
    &= \frac{\partial e_{\mathcal{D}}(p)}{\partial (\epsilon,\epsilon')} \, 
      \frac{(\partial\epsilon,\epsilon')}{\partial\tol} 
      \left(\frac{d W}{d \tol}\right)^{-1} \\
    &= \frac{[a(p),a(p)']}{1+\alpha e_0(p)} 
            \left[ \tr \frac{\partial\Gamma_{y_\mathcal{D}}}{\partial (\tol,\tol')} , \tr\frac{\partial\Gamma_{y'_{\mathrm{D}}}}{\partial(\tol,\tol')} \right]
            \begin{bmatrix}1 \\ \beta\end{bmatrix}
             \frac{\tol^{2s+1}}{(-2s)(1+c\beta^{-2s})}.
\end{align*}
Neglecting constant factors, which are irrelevant for the \emph{relative} merit of candidate points, and evaluating the derivatives at the current tolerance level $\tr(\Gamma_{y_\mathcal{D}})$, we define the acquisition function 
\begin{equation}\label{eq:utility}
    \phi(p) = \max_{\beta\in[0,1]} e_{\mathcal{D}}(p)^{q-1} \frac{\partial e_{\mathcal{D}}(p)}{\partial W(p)} (\beta).
\end{equation}
As it is often only practical to compute derivatives along with the values or not at all, we may restrict the choice of $\beta$ to the two extreme cases $\beta\in\{0,1\}$.
Selecting candidate points for inclusion into the valuation set $X$ can be based on finding local maximizers of the acquisition function $\phi$, or finding the best points from a random sampling or from low discrepancy sequences.

With the evaluation set $X$ fixed, the optimization problem~\eqref{eq:doe-incremental} reduces to the nonlinear programming problem $\min_{\tilde \tol,\tilde\tol'}E(\tilde\tol,\tilde\tol')$ for the new evaluation tolerances $\tilde\tol$, $\tilde\tol'$, subject to the pure improvement requirement $\tilde\tol \le \tol$, $\tilde\tol' \le \tol'$ and the computational budget constraint $W(\tilde\tol,\tilde\tol') \le W(\tol,\tol')+\Delta W$. For convexity reasons, we consider an equivalent reformulation in terms of $v=\tol^{-2}$ and $v'=(\tol')^{-2}$:
\begin{equation}\label{eq:doe-incremental-reduced}
\min_{\tilde v,\tilde v'\in\R^n_+} E(\tilde v,\tilde v')^q \quad \text{s.t.} \quad
\tilde v \ge v, \; \tilde v' \ge v', \; \tilde v' \le \tilde v, \; 
W(\tilde v,\tilde v') \le W(v,v')+\Delta W.
\end{equation}

\begin{theorem} \label{th:convex-objective}
    The objective $\tilde{E}(\tilde v, \tilde v')^q$ is convex. 
\end{theorem}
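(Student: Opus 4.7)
The plan is to reduce convexity of $\tilde E^q$ to pointwise convexity of the integrand and then to matrix convexity of inversion. Since $E(\tilde{\mathcal{D}})^q = \|e_{\tilde{\mathcal{D}}}\|_{L^q(\mathcal{X})}^q = \int_\mathcal{X} e_{\tilde{\mathcal{D}}}(p)^q\,dp$, and integration preserves convexity, it suffices to show that for each fixed $p\in\mathcal{X}$ the map $(\tilde v,\tilde v')\mapsto e_{\tilde{\mathcal{D}}}(p)^q$ is convex on $\R^{2n}_+$.

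The key observation is that in the chosen variables $\tilde v_i = \tol_i^{-2}$, $\tilde v'_i = (\tol_i')^{-2}$, the noise precision matrix $\tilde E^{-1}$ has diagonal entries that are linear in $(\tilde v,\tilde v')$. Treating $p$ as an additional node with $\tol_p=\tol'_p=\infty$ (i.e.\ zero precision), the augmented posterior precision $\hat K^{-1} + \mathrm{diag}(V(\tilde v),V'(\tilde v'),0)$ is affine in $(\tilde v,\tilde v')$ and positive definite by regularity of the prior. Using the classical fact that $M \mapsto M^{-1}$ is matrix convex on the positive definite cone (in the L\"owner order), precomposition with this affine map yields that the full posterior covariance $\Gamma$ is matrix convex in $(\tilde v,\tilde v')$. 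Restricting to the principal block corresponding to $p$ preserves the L\"owner inequality, so the marginal covariance $\Gamma_{\tilde{\mathcal{D}}}(p)$ is matrix convex as well.

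Since the trace is linear and L\"owner-monotone on symmetric positive semidefinite matrices, the scalar functions $\epsilon(p)=\tr\Gamma_{y_{\tilde{\mathcal{D}}}}$ and $\epsilon'(p)=\tr\Gamma_{y'_{\tilde{\mathcal{D}}}}$ are convex and nonnegative. By definition,
\[
e_{\tilde{\mathcal{D}}}(p) = \frac{a(p)\epsilon(p) + a'(p)\epsilon'(p)}{1+\alpha e_0(p)}
\]
is a nonnegative-weighted sum of these, hence convex and nonnegative in $(\tilde v,\tilde v')$. Finally, because $q\ge 1$, the scalar map $t\mapsto t^q$ is convex and non-decreasing on $[0,\infty)$, so $e_{\tilde{\mathcal{D}}}(p)^q$ is convex; integrating over $\mathcal{X}$ yields convexity of $\tilde E(\tilde v,\tilde v')^q$.

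The only nontrivial ingredient is the matrix convexity of matrix inversion on the positive definite cone, which is classical (e.g.\ Bhatia, \emph{Matrix Analysis}); everything else is routine bookkeeping about composition with an affine map, principal submatrices preserving the L\"owner order, monotonicity of the trace, and standard convex composition rules. The place I would be most careful is the affine dependence of the precision matrix: this is the reason the reformulation in terms of $v=\tol^{-2}$ and $v'=(\tol')^{-2}$ is the \emph{right} change of variables, and it is what fails for the original $(\tol,\tol')$ parametrization.
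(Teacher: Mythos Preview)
Your proof is correct and follows essentially the same route as the paper: convexity of $\epsilon,\epsilon'$ in $(v,v')$, hence of $e_{\mathcal D}(p)$, then composition with the convex non-decreasing $t\mapsto t^q$, and finally integration. The only difference is that where the paper merely cites~\cite[Thm.~3.2]{SemlerWeiser} for the convexity of $\tr\Gamma_{y_{\mathcal D}}$ and $\tr\Gamma_{y'_{\mathcal D}}$, you spell out the underlying argument---affineness of the precision in $(v,v')$, operator convexity of matrix inversion, and stability of the L\"owner order under principal submatrices and trace---which is exactly the content of that reference.
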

\begin{proof}
    As in~\cite[Thm.~3.2]{SemlerWeiser}, $\epsilon = \tr(\Gamma_{y_{\mathcal{D}}})$ and $\epsilon' = \tr(\Gamma_{y'_{\mathcal{D}}})$ are convex in $v$ and $v'$ at all $p\in\mathcal{X}$, and consequently, $e_{\mathcal{D}}(p)$ is convex as well. Due to convexity and monotonicity, $e_{\mathcal{D}}(p)^q$, and due to linearity, also its integral $E$ are convex.
\end{proof}
\begin{remark}
    In contrast to purely value-based GPR surrogates, here the objective $E^q$ is in general not strictly convex, since the covariance kernel containing entries of the form $\partial_{p_i}k(p_i,p_j)$ is not pointwise positive.
\end{remark}

\begin{figure}
    \centering
    \includegraphics[width=\textwidth]{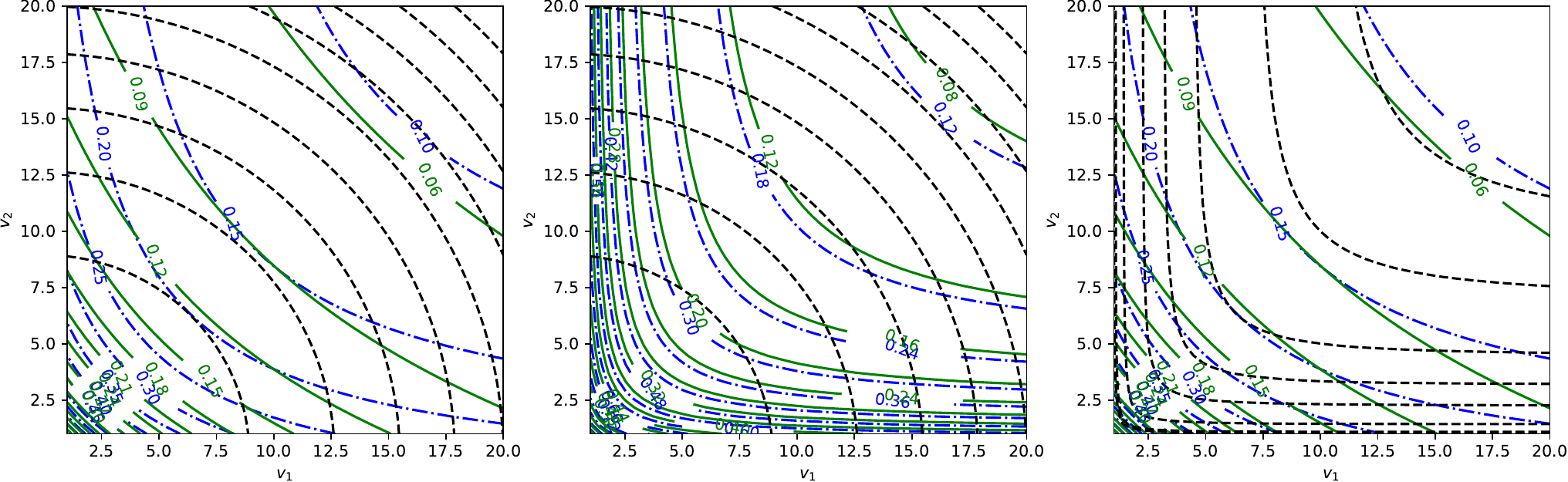}
    \caption{Sketch of the design problem~\eqref{eq:doe-incremental-reduced} for $n=2$ points. Level lines of the objective $E(v)$ without gradient data are drawn by blue lines, whereas those of the budget constraint are indicated by dashed lines. The gradient-based version of $E(v)$ is drawn by solid green lines.  \emph{Left:} For $s>1$ there is a unique non-sparse solution. \emph{Middle:} A smaller correlation length makes sparsity even less likely. \emph{Right:} For $s<1$, the admissible sets are non-convex, and we may expect multiple local sparse minimizers.}
    \label{fig:design-sketch}
\end{figure}
The convexity of the admissible set $\{\tilde v\in\R^{n+j}_+\mid  W(\tilde v)\le \Delta W + W(v)\}$ depends on the exponent $s$  in the work model~\eqref{eq:work-model}. Clearly, for $s\ge 1$, $W$ is convex, whereas for $s<1$ it is in general non-convex, not even quasi-convex. In combination with Thm.~\ref{th:convex-objective}, we obtain the following result.

\begin{corollary}
    For exponents $s\ge 1$, the tolerance design problem~\eqref{eq:doe-incremental-reduced} is convex. 
\end{corollary}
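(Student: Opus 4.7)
The plan is to verify the two standard ingredients for problem~\eqref{eq:doe-incremental-reduced} to be a convex program: a convex objective and a convex feasible set. The objective side is already taken care of by Thm.~\ref{th:convex-objective}, so no additional argument is required there. The bulk of the remaining work is to check that the feasible set is convex.

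First I would decompose the feasible set into its four defining constraints. The sign and ordering constraints $\tilde v \ge v$, $\tilde v' \ge v'$, $\tilde v' \le \tilde v$ each define a half-space and are thus trivially convex. The only substantive constraint is the budget constraint $W(\tilde v,\tilde v') \le W(v,v')+\Delta W$, which is a sublevel set of $W$. To conclude it is convex, it suffices to prove that $W$ itself is a convex function of $(\tilde v,\tilde v')$.

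Second, I would exploit the change of variables $v=\tol^{-2}$, $v'=(\tol')^{-2}$ built into~\eqref{eq:doe-incremental-reduced}: under this substitution, each summand $\tol_i^{-2s}$ of the work model~\eqref{eq:work-model} takes the form $\tilde v_i^{s}$, and analogously for the gradient terms $c(\tol_i')^{-2s}$. Since the scalar map $x\mapsto x^{s}$ is convex on $\R_+$ for $s\ge 1$, and $W$ is a nonnegative linear combination of such terms in the coordinates $\tilde v_i$ and $\tilde v_i'$, $W$ is convex; hence its sublevel set, and therefore the full feasible set, is convex. Combining this with Thm.~\ref{th:convex-objective} gives a convex objective on a convex set, which is the claim.

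I do not anticipate any real obstacle: both ingredients are essentially noted in the paragraph immediately preceding the corollary, so the proof is a short assembly. The one conceptual point worth stating is that the threshold $s\ge 1$ is tight in the $v$-parametrisation, since for $s<1$ the functions $x\mapsto x^s$ are concave on $\R_+$, and $W$ fails to be even quasi-convex, as already remarked in the paper; the change of variables is what makes the exponent $s$ a genuine power rather than an inverse power, and this is the decisive observation behind the corollary.
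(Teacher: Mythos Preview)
Your proposal is correct and follows exactly the route sketched in the paragraph preceding the corollary: convexity of the objective from Thm.~\ref{th:convex-objective}, and convexity of the feasible set from the observation that in the $(\tilde v,\tilde v')$ variables the work model becomes a sum of terms $\tilde v_i^{s}$ and $c(\tilde v_i')^{s}$, which are convex for $s\ge 1$. Your explicit decomposition of the feasible set into half-spaces plus the sublevel set of $W$ just spells out what the paper leaves implicit.
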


Finite elements of order $r$ in $l$ dimensions with an optimal solver yield $s = l/(2r)$. Consequently, for linear finite elements in two or three space dimensions, any minimizer is a global one, most often unique and non-sparse. In contrast,  high order finite elements with $r\ge 2$ lead to $s<1$ and non-convex admissible sets. Their pronounced corners on the coordinate axes make the sparsity of a minimizer likely, see Fig.~\ref{fig:design-sketch} right. This agrees with intuition: if increasing the accuracy at a specific sample point is computationally expensive, it is advantageous to distribute the work on a lower accuracy level to several points. If increasing the accuracy is cheap, then it is often better to increase the accuracy of a single point, that, to some extent, shares its increased accuracy in a certain neighborhood.
Level lines of the gradient-enhanced objective $E(v)$ are also shown (green). A smaller error can be achieved with the same amount of computational work. Note that gradient data does not affect the convexity of the problem.

While in the convex case $s\ge 1$ the optimization is straightforward with any nonlinear programming solver, the non-convex case is more difficult. Fortunately, guaranteed global optimality is in practice not necessary. The expected sparsity structure suggests a particular heuristic approach: For $i=1,\dots,n+j$ consider $\tilde v=v+ae_i$ with $a>0$ such that $W(v)=\Delta W + W(\mathcal{D})$, i.e. the accuracy of only a single point is improved, and select the design $v$ with smallest objective. If this satisfies the necessary first-order conditions, accept it as solution. Otherwise, perform a local minimization starting from this point.

\section{Numerical examples}\label{sec:experiments}
We present two numerical examples, a low-dimensional one with simple model function $y$, and a PDE model. We compare the results of the adaptive phase with and without gradient data.

\subsection{Analytical example}
As a model $y$ we consider the rotated parabolic cylinder, i.e
\[
    y_\phi(p) = \left ( \cos(\phi)(p_1+p_2)+ \sin(\phi)(p_2-p_1) \right )^2 \quad \text{for $p \in \mathcal{X} = [0,2]^2$}, \; \phi \in \mathbb{R}_{> 0}.
\]
We acquire $m=3$ measurements for $\phi \in \{0,2,4\}$, assume different accuracies of these independent measurements, and a likelihood $\Sigma_{L} = 10^{-2}\mathrm{diag}(1,0.1,1)$.
We start with an initial design of seven evaluation points, see Fig.~\ref{fig:2D_adaptivephase}, and an evaluation variance of $\sigma^2 = 0.1$. We use the work model with $s=1/2$. Evaluation of the error quantity $E$ from~\eqref{eq:error-model} is performed by Monte Carlo integration.
To determine candidate points for inclusion in the evaluation set $X$, we sample the acquisition function~\eqref{eq:utility} for $\beta\in\{0,1\}$. Only the point with the largest value is included, and gradient information included by setting $\tol'=\tol$ if $\beta=1$ obtained a higher value. The adaptive phase is terminated if the error $E$ drops below the desired tolerance $\mathrm{TOL} = 10^{-2}$.

\textbf{Resulting designs.} Fig.~\ref{fig:2D_adaptivephase} illustrates the resulting design with (right) and without (left) gradient data. Black dots are evaluation points without gradient data, while red triangles mark points with gradient information. The size of each marker corresponds to the evaluation accuracy, with larger markers indicating higher accuracy. Red dots represent the initial design, while the color mapping represents the estimated error. The gradient-based algorithm required  24 points, 11 of them with gradient information, each with different evaluation accuracy. The purely value-based design required 30 points.
\begin{figure}[htb!]
    \begin{subfigure}[]{0.46\linewidth}
      \includegraphics[width=\textwidth]{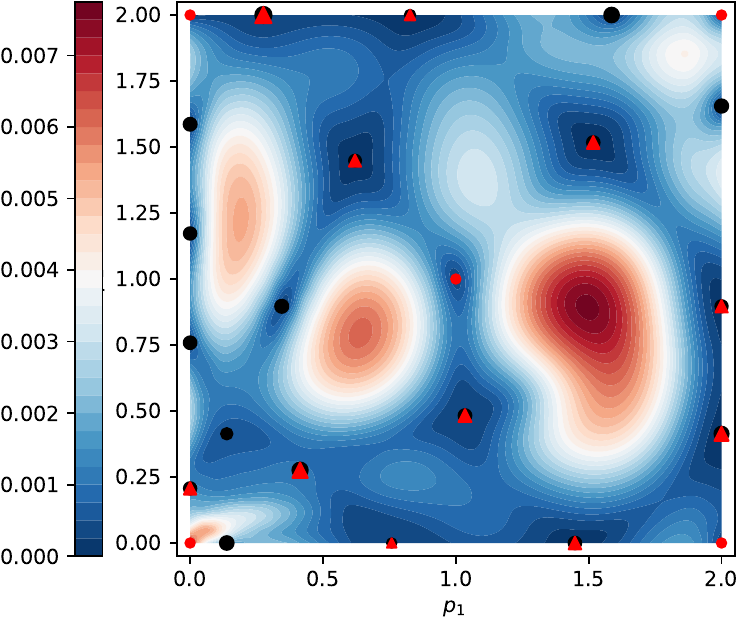}
    \end{subfigure}
    \hfill
    \begin{subfigure}[]{0.52\linewidth}
      \includegraphics[width=\textwidth]{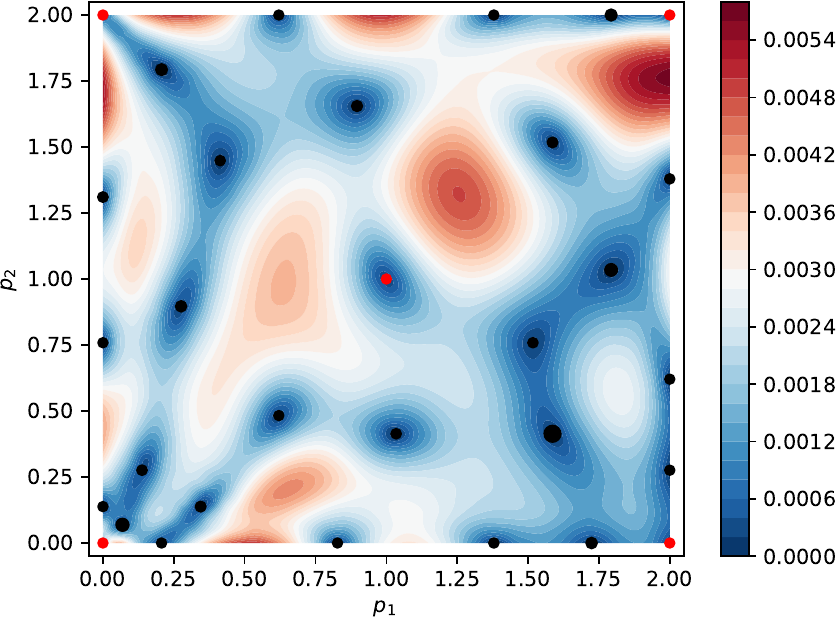}
    \end{subfigure}
    \caption{Contour plot of the local error density. Red points show the initial design. Adaptively added data points are indicated by black dots. Small markers indicate low accuracy. Gradient information is indicated by red triangles.
    The color mapping shows the estimated local reconstruction error evaluated on a dense grid of $10^3$ points. The designs were obtained using an incremental budget of $\Delta W = 10^{4}$ with a desired tolerance of $\mathrm{TOL} = 0.01$.}
    \label{fig:2D_adaptivephase}
\end{figure}

\textbf{Convergence.} Fig.~\ref{fig:123} presents the error against the computational work with different incremental budgets. On the left, we compare designs with gradient data (solid lines) and without (dashed lines). On the right, we compare adaptive designs with gradient data with a position-adaptive design using uniform tolerances for values and gradients.
\begin{figure}
    \begin{subfigure}[]{0.5\linewidth}
      \includegraphics[width=\textwidth]{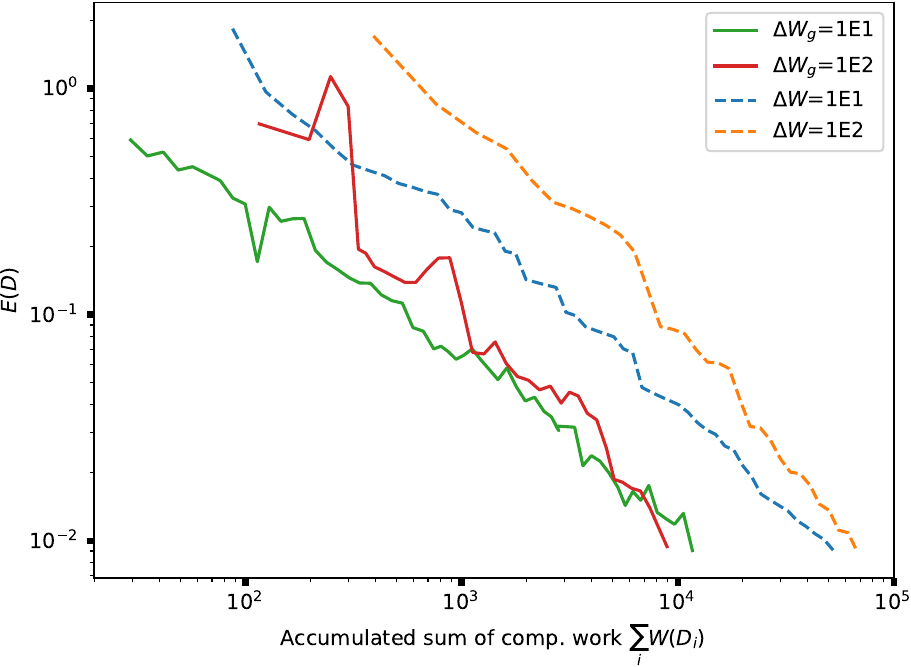}
    \end{subfigure}
    \hfill
    \begin{subfigure}[]{0.5\linewidth}
      \includegraphics[width=\textwidth]{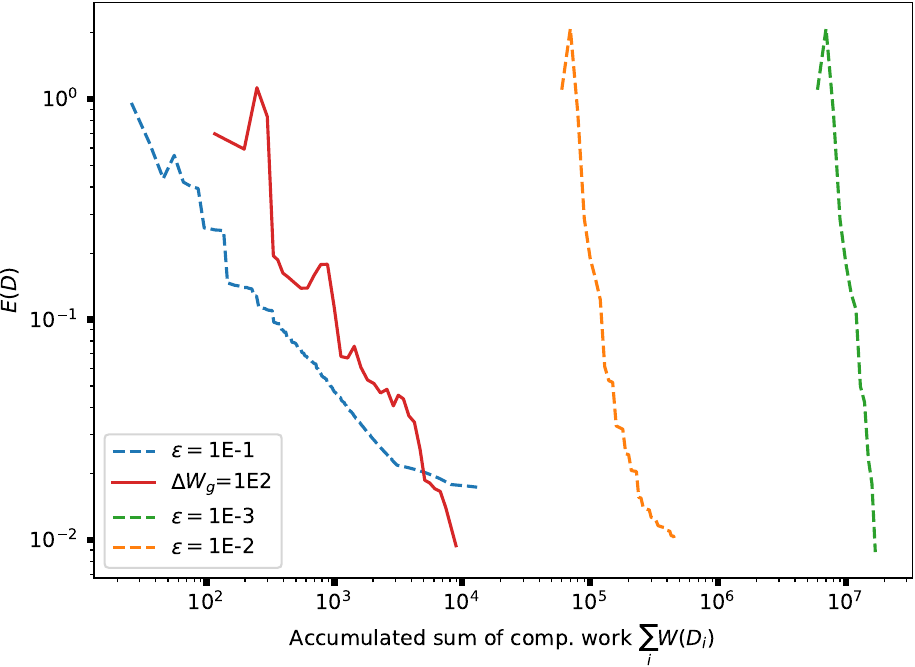}
    \end{subfigure}
        \caption{Estimated global error $E(\mathcal{D})$ versus accumulated computational work in GEGPR surrogates. \textit{Left}: $E(\mathcal{D})$ for different incremental work $\Delta W$. Solid lines with gradient data. \textit{Right}: Different uniform tolerances in position-adaptive designs compared with different curves for $\Delta W_g = 100$. }
    \label{fig:123}
\end{figure}
Including derivative information is clearly more efficient, by roughly one order of magnitude, but also leads to pronounced non-monotone convergence. This is likely an effect of inexact gradient data affecting  hyperparameter optimization. In contrast, value-based designs exhibit a mostly monotonic behavior. Also apparent is that smaller incremental budgets are more efficient, in particular for low desired accuracy.
On the right, the behaviour of uniform tolerance designs is shown. For large tolerances ($\tol=10^{-1}$), the computational work is small, but the desired accuracy is not reached. Smaller tolerances ($\tol\le 10^{-2}$) achieve the desired accuracy, but at a much higher cost, roughly two orders of magnitude and more.

\textbf{Reliability of local error estimates.} The error model~\eqref{eq:error-model} is coarse and may not capture the actual error in identified parameters correctly. It is affected both by linearization error in estimated error transport and the GPR error estimate, which in turn depends also on the hyperparameter optimization. We compare the estimated global error $E(\mathcal{D})$ to the actually obtained errors. For 1600 points $p_i$, sampled randomly from $\mathcal{X}$, we compute the local error estimate $e_i = e_{\mathcal{D}}(p_i)$ from~\eqref{eq:local-error-model}, and compare this to the expected actual error, approximated by the sample mean $\tilde e_i := n_k^{-1} \sum_{k=1}^{n_k} \|p(y(p_i)+\delta_{i,k})-p_\mathcal{D}(y(p_i)+\delta_{i,k})\|$ with  realizations $\delta_{i,k}$ of the measurement error distributed as $\mathcal{N}(0,\Sigma_l)$. 
We construct histograms of the ratio ${e}_i/\tilde{e}_i$. Ratios less than 1 indicate an underestimation of the error, while values greater than 1 indicate an overestimation. 
\begin{figure}
    \centering
 \includegraphics[width=0.8\textwidth]{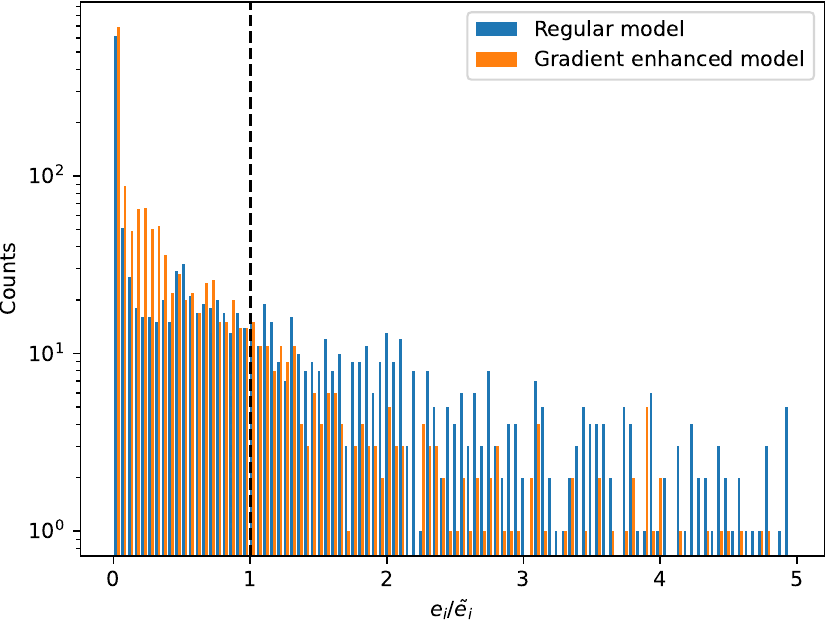}
    \caption{Log-histogram of ${e}_i /\tilde{e}_i$ for value-based GPR (blue) and GEGPR (orange) surrogates.}
    \label{fig:errorhistogram}
\end{figure}
The results in Fig.~\ref{fig:errorhistogram} suggest that value-based and gradient enhanced GPR surrogates behave similar. The coarse a priori error estimate is not strictly reliable, but appears to be reasonably accurate for steering the adaptive design process.

\textbf{Parameter reconstruction.}
For an exemplary parameter reconstruction we assume a true parameter $p = (p_1,p_2) = (1,1.5)$ and exact measurements. Tab.~\ref{tab:results} shows the reconstruction results lying well within the prescribed global tolerance, with comparable errors. 
\begin{table}
    \centering
    \begin{tabular}{cclcl}
    \toprule
     & $p_1$ & $\Delta p_1$ & $p_2$ & $\Delta p_2$  \\ \midrule
    value   & 0.9968&   0.0032  & 1.496  &  0.0047         \\ 
    value+gradient  & 0.9978 & 0.0022 & 1.502  & 0.0023 \\ 
    \bottomrule
    \end{tabular}
    \caption{Reconstruction results using the surrogate model for $\Delta W = 10^4$.}
    \label{tab:results}
\end{table}

\subsection{FEM Example}

Scatterometry is a more complex problem from optical metrology~\cite{hammerschmidt_scat,schneider_scat,farchim_scat}. The aim is to identify geometric parameters in a nano-structured diffraction pattern from the intensity of reflected monochromatic light of different polarizations and incidence angles $\phi,\theta$, see Fig.~\ref{fig:expsetup} and~\cite{Wurm_2011_scat}. A forward model $y(p)$ was developed using JCMsuite\footnote{\url{https://www.jcmwave.com/jcmsuite}} as a solver for the governing Maxwell's equations. The model is parametrized by the geometry of the line grid sample, see Fig.~\ref{fig:expsetup}.

The numerical model maps model parameters $p = \left[ \mathrm{cd}, t, r_{\text{top}}, r_{\text{bot}} \right] \in \mathcal{X} \subset \R^{4}$ to $m=4$  zero'th order scattering intensities $y(p)\in\R^4$, given by incrementing $\theta$ from \SI{5}{\degree} to \SI{11}{\degree} in steps of $\SI{2}{\degree}$, utilizing $P$-polarized light with an angle of incidence $\phi = \SI{0}{\degree}$.
\begin{figure}[b]
    \centering
    \includegraphics[width=0.75\textwidth]{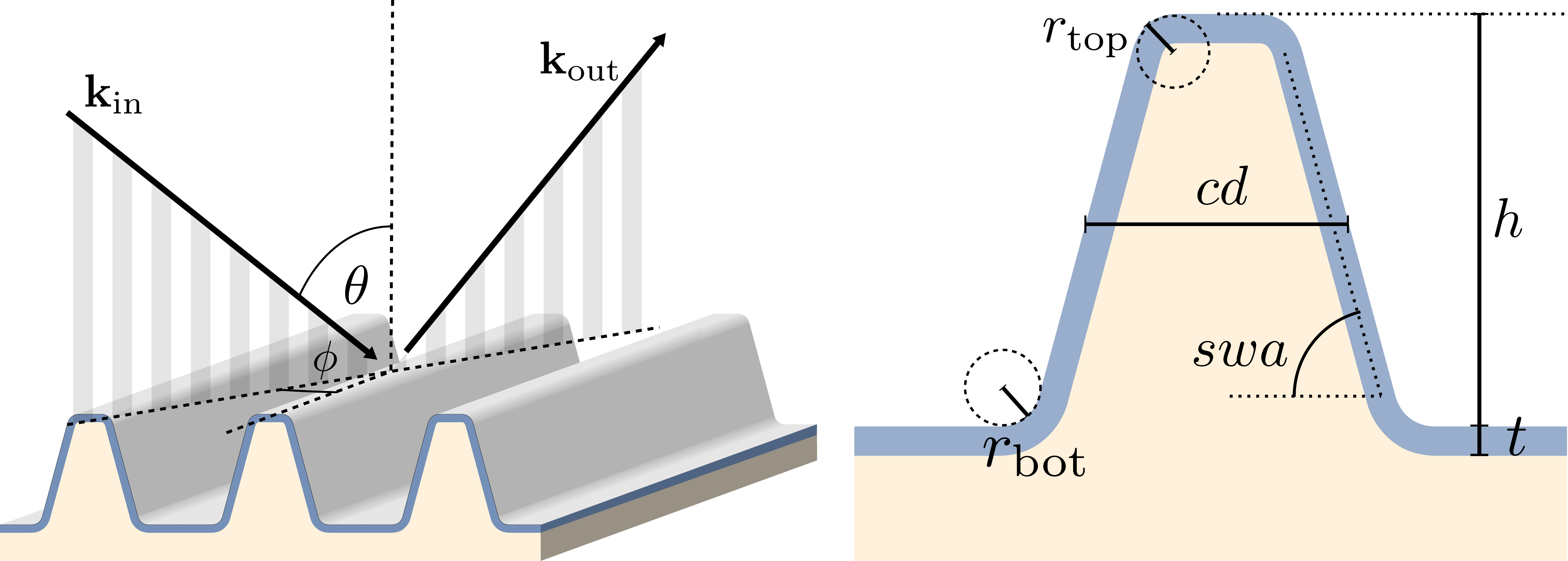}
    \caption{\textit{Left}: Scatterometry setup used for the characterization of periodic nanostructures on surfaces. The incident light is varied at angles $\theta$ and $\phi$ and the diffraction patterns are recorded.  \textit{Right}:  Geometry parametrized in terms of radii $\rt$ and $\rb$, height of the grating, side wall angle (swa), critical dimension (cd), and oxide layer thickness ($t$).}
    \label{fig:expsetup}
\end{figure}

\begin{table}
    \centering
    \begin{tabular}{lccccccc}
    \toprule
        Parameter & $\mathrm{cd}$ & $t$ & $r_{\mathrm{top}}$ & $r_{\mathrm{bot}}$ &\hspace{3em}& $h$ & $\mathrm{swa}$ \\
        \midrule
        range     & $[24,28]\,\si{\nano\meter}$ & $[4,6]\, \si{\nano\meter}$ & $[8,10]\, \si{\nano\meter}$ & $[3,7]\,\si{\nano\meter}$ && $\SI{48.3}{\nano\meter}$ & \SI{87.98}{\degree} \\
        \bottomrule
    \end{tabular}
    \caption{The parameter domain $\mathcal{X}$ for the scatterometry problem.}
    \label{tab:high_dim_domain}
\end{table}

At the beginning, $\mathcal{X}$ is covered with $2^7 = 128$ points from a Sobol sequence. According to the knwon measurement uncertainty, we set \[\Sigma_l = 10^{-2}\text{diag}(8.57,8.56,8.60,8.63)\] and ask for $E(\mathcal{D}) \leq \text{TOL} = 10^{-3}$.
\textbf{Convergence.}
In Fig.~\ref{fig:2D_CompareErrorOverCost} we plot the estimated global error against the computational work.
\begin{figure}
    \begin{subfigure}[]{0.5\linewidth}
      \includegraphics[width=\textwidth]{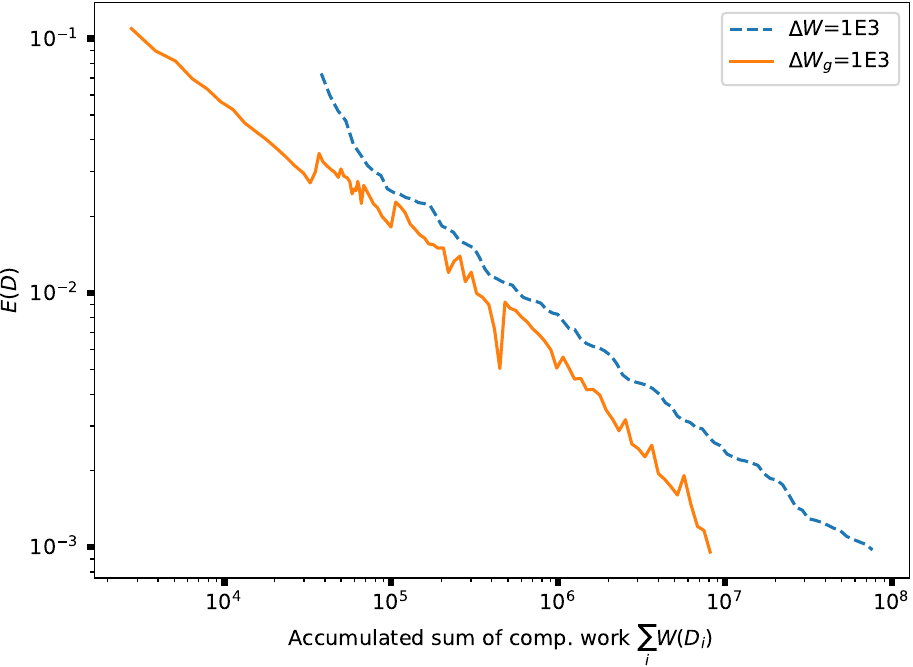}
    \end{subfigure}
    \hfill
    \begin{subfigure}[]{0.5\linewidth}
      \includegraphics[width=\textwidth]{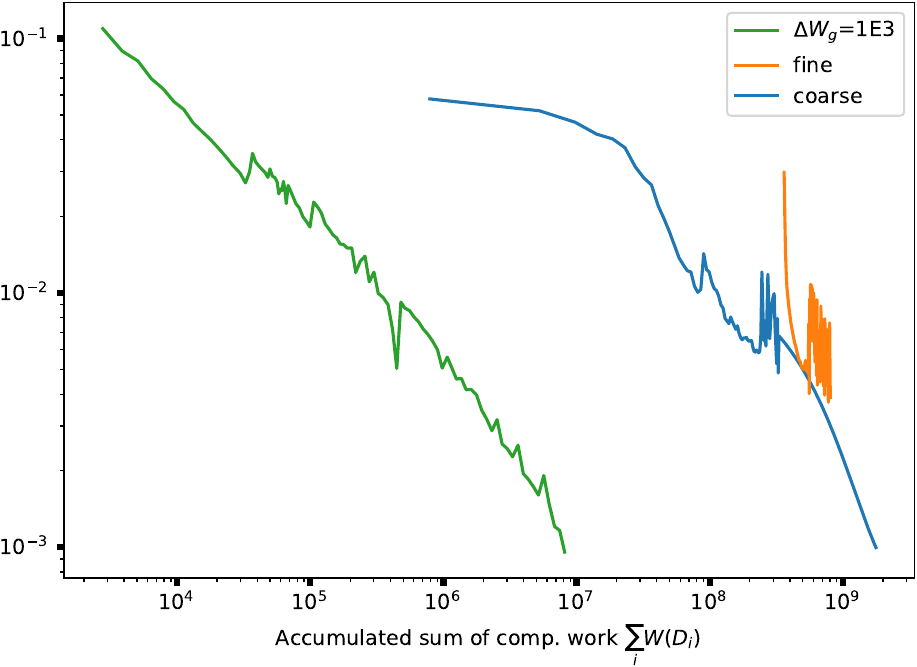}
    \end{subfigure}
    \caption{Estimated global error $E(\mathcal{D})$ versus computational work. \textit{Left}: $E(\mathcal{D})$ for different amounts of incremental work $\Delta W$. The index $g$ indicates a gradient enhanced surrogate model. \textit{Right}: Two different fixed evaluation accuracies within a position-adaptive algorithm compared to a fully adaptive design for $\Delta W_g = 10^3$.}
    \label{fig:2D_CompareErrorOverCost}
\end{figure}
On the left, we a GPR design (dashed) with a GEGPR design (solid) for the same incremental budget $\Delta W$. As in the analytical example, including gradient information improves the efficiency at the cost of a non-monotone convergence behaviour. The improved efficiency is consistent, though quantitatively varying, over a wide range of incremental work budgets, as shown in Tab.~\ref{tab:compwork_compare}.  
\begin{table}[tb]
    \centering
    \begin{tabular}{ccccc}
    \toprule
    $\Delta W$ & $10^3$ & $10^4$ & $10^5$ & $10^6$ \\
    $W/W_g$    & $10$   & $500$  &  $10$  &  $50$ \\
    \bottomrule
    \end{tabular}
    \caption{Performance improvement factor $W/W_g$ due to including gradient information in the adaptive design selection for different incremental budgets.}
    \label{tab:compwork_compare}
\end{table}

In Fig.~\ref{fig:2D_CompareErrorOverCost} right, we compare the presented approach with position-adaptive designs using fixed finite element grids. The coarse grid has a maximum edge length of $h_c = \SI{16}{\nano\meter}$, the fine grid has a maximum edge length of $h_f = \SI{1}{\nano\meter}$, corresponding to FE discretization errors $\epsilon_{c} = 10^{-2}$ and $\epsilon_{f} = 10^{-4}$, respectively. When commencing with the coarse grid, the error reduction in the curve is initially gradual. However, it significantly accelerates beyond a certain threshold, identified as $W \approx 3\cdot 10^7$. Interestingly, beyond this threshold, the presence of pronounced oscillations becomes evident, causing the error to temporarily plateau between $W=2\cdot 10^8$ and $W=4\cdot 10^8$. Subsequently, the curve resumes its descent and eventually achieves the desired tolerance level around $W \approx 2 \cdot 10^9$. Notably, during the analysis, it was observed that within the aforementioned oscillation range, the hyperparameter optimization for a specific parameter component failed to yield satisfactory results. As a result, it was necessary to set this parameter to a default value of $L = 1$, providing a plausible explanation for the observed behavior.

Surprisingly, we encountered difficulties in achieving convergence for the fine grid. Despite an initial rapid decrease in error, the curve exhibited pronounced oscillations across various settings, necessitating the termination of the procedure. Additional tests involving different constant hyperparameters and varied hyperparameter bounds within the optimization failed to yield any improvements. Consequently, further research is required to comprehensively elucidate and resolve this issue.

Thus, in the direct comparison between fully adaptive and semi-adaptive algorithm, we can see that we can save computational work by a factor of $\approx 100$.

\textbf{Reliability of local error estimator.}
As before, we employ $7^4 = 2401$ parameter points to generate the estimated local errors $e_{i}$ and the expected true errors $\tilde{e}_i$ with and without gradient information. A FE simulation on a fine grid with maximum edge length of $\SI{1}{\nano\meter}$ is used as exact forward model. 
\begin{figure}[!htb]
    \centering
 \includegraphics[scale = 0.5]{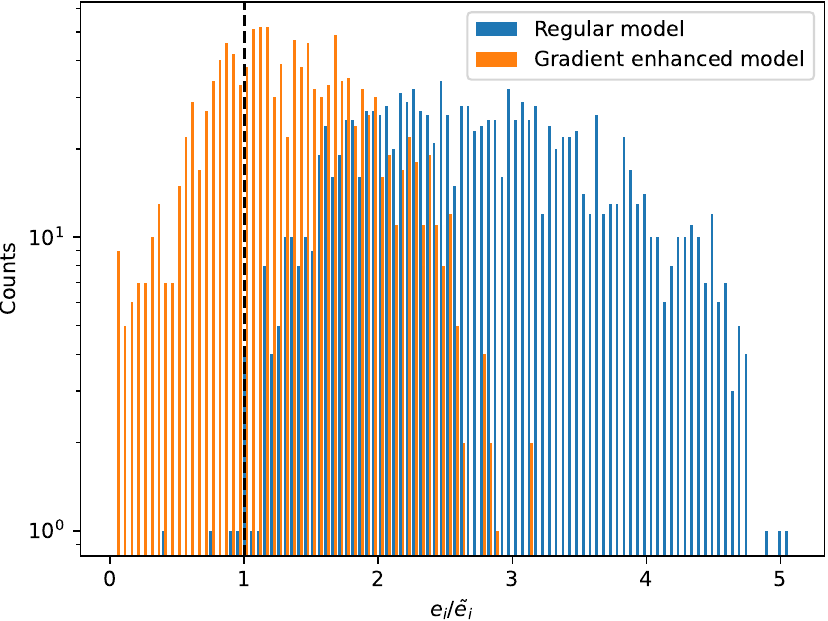}
    \caption{Log-histogram of ${e}_i\cdot \tilde{e}_i^{-1}$. The blue histogram was generated for the regular model, while the orange hisotgram was generated for the gradient enhanced model.}
    \label{fig:errorhistogram}
\end{figure}
In contrast to the analytical example, the GPR and GEGPR surrogates differ slightly, with GPR consistently overestimating the true errors. This suggests that the GPR model should provide smaller actual errors than aimed at. Again, the GEGPR error estimator is not strictly reliable, but apparently sufficiently robust for steering the adaptive design selection process.
 
\textbf{Parameter reconstruction.}
We perform an exemplary parameter reconstruction with true parameters $p_{\mathrm{true}} = (26.0,5.0,10.5,5.0)\, \si{\nano\meter}$ and simulated measurements with artificial measurement noise of size $10^{-3}$. The parameters are recovered within the imposed tolerance, as shown in Tab.~\ref{tab:result_HD}.
\begin{table}
\centering
\begin{tabular}{lcclcc}
\toprule
Parameter  & $p_{\rm CD}$ & $|\Delta p_{\rm CD}|$ & & $p_t$ & $|\Delta p_t|$ \\ 
          \midrule
val        & 26.001  & 1.000E-3 &   & 4.992    & 0.878E-3 \\
val+grad   & 25.999  & 0.983E-4 &   & 4.999 & 0.812E-4    \\ 
\midrule
Parameter  & $p_{\rm top}$ & $|\Delta p_{\rm top}|$ & & $p_{\rm bot}$ & $|\Delta p_{\rm bot}|$ \\ 
          \midrule
val        & 10.502  & 2.00E-3 &   & 4.991    & 9.20E-3   \\
val+grad   & 10.499 & 0.87E-4 &   & 4.999    & 1.54E-4     \\ 
\bottomrule
\end{tabular}
\caption{Reconstruction results for the scatterometry problem.}
\label{tab:result_HD}
\end{table}

\section{Conclusion}

The joint adaptive selection of evaluation positions and evaluation tolerances with a greedy heuristic improves the efficiency of building a GPR surrogate from finite element simulation significantly. Including gradient information enhances this further, if derivatives can be computed cheaply. In numerical experiments, improvement factors between 100 and 1000 have been observed compared to methods relying on only selecting evaluation positions. While the error estimator based on the GPR variance is not strictly reliable, it appears to be sufficiently well-suited for steering the adaptive design selection.

\textbf{Funding.}
This work has been supported by Bundesministerium für Bildung und Forschung -- BMBF, project number 05M20ZAA (siMLopt).

\bibliographystyle{plain}
\bibliography{AdaptiveGPR}

\end{document}